\def\G{\Gamma}
\def\S{\Sigma}
\newcommand{\set}[1]{\left\{#1\right\}}
\begin{document}
\doi{10.1080/1478643YYxxxxxxxx}
\issn{1478-6443}
\issnp{1478-6435}
\jvol{00} \jnum{00} \jyear{2008} \jmonth{21 December}

\markboth{M.J.C. Loquias and P. Zeiner }{Philosophical Magazine}

\title{Colourings of lattices and coincidence site lattices}

\author{Manuel Joseph C. Loquias$^{\rm a,b}$$^{\ast}$\thanks{$^\ast$Corresponding author. Email: mloquias@math.uni-bielefeld.de
\vspace{6pt}} and Peter Zeiner$^{\rm a}$\\
\vspace{6pt}  $^{\rm a}${\em{Faculty of Mathematics, Bielefeld University, Universit\"{a}tstra\ss e 25, 33615 Bielefeld, Germany}}; 
$^{\rm b}${\em{Institute of Mathematics, University of the Philippines, Diliman, C.P. Garcia St., 1101 Diliman, Quezon City, Philippines}}}

\maketitle

\begin{abstract}
	The relationship between the coincidence indices of a lattice $\G_1$ and a sublattice $\G_2$ of $\G_1$ is examined via the colouring of	$\Gamma_1$ that is obtained by assigning a unique 
	colour to each coset of $\G_2$.  In addition, the idea of colour symmetry, originally defined for symmetries of lattices, is extended to coincidence isometries of lattices.  An example 
	involving the Ammann-Beenker tiling is provided to illustrate the results in the quasicrystal setting.  
	\begin{keywords}
		coincidence site lattice, coincidence index, colour coincidence, colour symmetry
	\end{keywords}
\end{abstract}

\section{Introduction}
	Coincidence site lattices are used in studying and describing the structure of grain boundaries in crystals.\cite{B70}  The discovery of quasicrystals led to a more mathematical study 
	of coincidence site lattices, including coincidence site lattices in dimensions $d\geq 4$ and their generalizations to
	modules.\cite{PBR96,B97,Z05,Z06,BZ08,BGHZ08,Zo06,H09}

	Similar to coincidence site lattices, a revived interest on colour symmetries in recent years was brought upon by Schechtman's discovery, see 
	\cite{MP94,L97,B97B,BGS02,BGS04,BG04,DLPF07,L08,LF08,BDLPEFF08}. Despite being two different problems, the enumeration and classification of colour symmetries of lattices and the 
	identification of coincidence site lattices come hand in hand.\cite{Sc80,Sc84,PBR96,B97B,L08,LF08}.  

	It is well-known that a lattice and its sublattices have the same set of coincidence isometries.  In fact, not only are the coincidence isometries for the three cubic lattices the 
	same, but also are the coincidence indices for a given coincidence isometry.  This is due to a particular shell structure of the lattice points: points at the corners of a cube and 
	those in the centre or on the faces of the cube lie on different shells, and thus cannot be mapped onto each other by an isometry.\cite{GBW74} On the other hand, aside from having 
	different point groups, the two four-dimensional hypercubic lattices do not share the same set of coincidence indices (the coincidence indices of the centred type are all odd while 
	those of the primitive type are odd or twice an odd number). \cite{B97}  This motivates the following questions: Under which conditions does a sublattice have the same coincidence 
	indices as the lattice itself? Is it possible to calculate the coincidence indices of a sublattice once the coincidence indices of its parent lattice have already been determined? In 
	this paper, answers to these questions are discussed by looking at certain colourings of lattices.

	A discrete subset $\G$ of $\mathbbm{R}^d$ is a $\emph{lattice}$ (of rank and dimension $d$) if it is the $\mathbbm{Z}$-span of $d$ linearly independent vectors over 
	$\mathbbm{R}$.  As a group, $\G$ is isomorphic to the free abelian group of rank $d$.  A \emph{sublattice} of $\G$ is a subgroup of $\G$ of finite index. 

	An $R\in O(d)$ is a \emph{coincidence isometry} of $\G$ if $\G(R):=\G\cap R\G$ is a sublattice of $\G$.  The sublattice $\G(R)$ is called a \emph{coincidence site lattice} 
	(CSL) while the index of $\G(R)$ in $\G$, $\S(R):=[\G:\G(R)]=[R\G:\G(R)]$, is called the \emph{coincidence index of} $R$.  The set of coincidence isometries of $\G$ forms a 
	group, denoted by $OC(\G)$.
	
	If $\G_2$ is a sublattice of $\G_1$ of index $m$, then $OC(\G_1)=OC(\G_2)$.\cite{B97}  However, the coincidence indices and corresponding multiplicities of $\G_1$ and $\G_2$ are 
	in general different.  What is known is if $R\in OC(\G_1)$, then $\S_1(R)\mid m\,\S_2(R)$ (where $\S_i(R)$ is the coincidence index of $R$ with respect to  $\G_i$, for 
	$i=1,2$).\cite{B97}  We shall derive a formula for $\S_2(R)$ in terms of $\S_1(R)$ by considering properties of the colouring of $\G_1$ that is induced by $\G_2$.
	
	A \emph{colouring} of $\G_1$ determined by the sublattice $\G_2$ of index $m$ is a bijection between the set of cosets of $\G_2$ in $\G_1$ and a 
	set $C$ of $m$ colours. For simplicity, we shall say that the coset $c_i+\G_2\subseteq \G_1$ has \emph{colour} $c_i$.

	Denote by $G$ the symmetry group of $\G_1$.  A symmetry in $G$ is called a \emph{colour symmetry} of the colouring of $\G_1$ induced by $\G_2$ if it permutes the colours in 
	the colouring, that is, all and only those points having the same colour are mapped by the symmetry to a certain colour.  The group formed by the colour symmetries of the 
	colouring of $\G_1$ is called the \emph{colour group} or \emph{colour symmetry group}, usually denoted by $H$ (\cite{DLPF07,Sc84}, see also \cite{GS87} for an equivalent 
	formulation).  Since $H$ acts on $C$,  there exists a homomorphism $f:H\rightarrow P(C)$ where $P(C)$ is the group of permutations of $C$.  The kernel, $K$, of $f$ is the 
	subgroup of $H$ whose elements fix the colours, in other words, $K$ is the symmetry group of the coloured lattice.  We obtain that the group of colour permutations of the 
	lattice, $f(H)$, is isomorphic to $H/K$.\cite{DLPF07}  

	We shall generalise the notion of a colour symmetry to that of a colour coincidence.  In addition, we associate the property that a coincidence isometry $R$ of $\G_1$ is a 
	colour coincidence of the colouring of $\G_1$ determined by a sublattice $\G_2$ with the relationship between the coincidence indices of $R$ with respect to $\G_1$ and $\G_2$.

\section{Coincidence index with respect to a sublattice}
	Let $\G_1$ be a lattice in $\mathbbm{R}^d$ and $\G_2$ be a sublattice of $\G_1$ of index $m$.  We write $\G_1=\bigcup_{i=1}^m(c_i+\G_2)$ where $c_1=0$, and we consider the colouring of 
	$\G_1$ determined by $\G_2$.  Fix an $R\in OC(\G_1)=OC(\G_2)$ and denote by $\S_i(R)$ the coincidence index of $R$ with respect to $\G_i$, for $i=1,2$.  Let 
	$I:=\set{c_i+\G_2:(c_i+\G_2)\cap\G_1(R^{-1})\neq \emptyset}$ and $J:=\set{c_j+\G_2:(c_j+\G_2)\cap\G_1(R)\neq\emptyset}$.  Without loss of generality, we assume throughout the paper that 
	$c_i\in\G_1(R^{-1})$ and $c_j\in\G_1(R)$ whenever $c_i+\G_2\in I$ and $c_j+\G_2\in J$.  We then obtain partitions of $\G_1(R^{-1})$ and $\G_1(R)$ that are induced by $\G_2$, given by 
	$\G_1(R^{-1})=\bigcup_{c_i+\G_2\in I}{(c_i+\G_2)\cap\G_1(R^{-1})}$ and $\G_1(R)=\bigcup_{c_j+\G_2\in J}{(c_j+\G_2)\cap\G_1(R)}$.
	These partitions correspond to colourings of $\G_1(R^{-1})$ and $\G_1(R)$, respectively, wherein the colours are inherited from the colouring of $\G_1$ determined by $\G_2$.  Observe 
	that all points of $\G_1(R^{-1})$ are mapped bijectively by $R$ to points of $\G_1(R)$, that is, $R[\G_1(R^{-1})]=\G_1(R)$.

	The following lemma tells us that $\G_2(R)$ consists of those points coloured $c_1$ in the colouring of $\G_1(R)$ whose preimages under $R$ are also points coloured $c_1$ in the 
	colouring of $\G_1(R^{-1})$.

	\begin{lemma}\label{intandeqthm}
		Let $\G_2$ be a sublattice of $\G_1$ and $R\in OC(\G_1)$.  Then 

		\makebox[\linewidth][c]{$[\G_2\cap\G_1(R)]\cap R[\G_2\cap\G_1(R^{-1})]=[\G_2\cap\G_1(R)]\cap[R\G_2\cap\G_1(R)]=\G_2(R)$.}

		\noindent In particular, if $R\G_2\cap\G_1(R)=\G_2\cap\G_1(R)$ then $\G_2(R)=R\G_2\cap\G_1(R)=\G_2\cap\G_1(R)$.
	\end{lemma}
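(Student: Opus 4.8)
The plan is to prove both equalities by elementary set-theoretic manipulation, exploiting three facts: that $R$, being orthogonal, is a bijection on $\mathbbm{R}^d$ and hence commutes with intersection; that $R[\G_1(R^{-1})]=\G_1(R)$; and that the passage $\Lambda\mapsto\Lambda\cap R\Lambda$ is monotone with respect to inclusion of lattices. (It is also worth noting at the outset that $\G_2(R)$ is well defined as a CSL, since $OC(\G_1)=OC(\G_2)$, although this is not actually needed for the set equalities themselves.)

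First I would establish the auxiliary identity $R[\G_1(R^{-1})]=\G_1(R)$. Since $\G_1(R^{-1})=\G_1\cap R^{-1}\G_1$ and $R$ is bijective, $R[\G_1(R^{-1})]=R\G_1\cap (RR^{-1})\G_1=R\G_1\cap\G_1=\G_1(R)$. Using bijectivity of $R$ once more gives $R[\G_2\cap\G_1(R^{-1})]=R\G_2\cap R[\G_1(R^{-1})]=R\G_2\cap\G_1(R)$, and intersecting both sides with $\G_2\cap\G_1(R)$ yields the first claimed equality
\[
 [\G_2\cap\G_1(R)]\cap R[\G_2\cap\G_1(R^{-1})]=[\G_2\cap\G_1(R)]\cap[R\G_2\cap\G_1(R)].
\]

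For the second equality I would rewrite the right-hand side above. Since intersection is associative and commutative, $[\G_2\cap\G_1(R)]\cap[R\G_2\cap\G_1(R)]=(\G_2\cap R\G_2)\cap\G_1(R)=\G_2(R)\cap\G_1(R)$. It then remains only to observe that $\G_2(R)\subseteq\G_1(R)$: from $\G_2\subseteq\G_1$ we get $R\G_2\subseteq R\G_1$, hence $\G_2(R)=\G_2\cap R\G_2\subseteq\G_1\cap R\G_1=\G_1(R)$. Therefore $\G_2(R)\cap\G_1(R)=\G_2(R)$, completing the chain. Finally, the ``in particular'' statement is immediate: if $R\G_2\cap\G_1(R)=\G_2\cap\G_1(R)$, then the middle expression is just this common set intersected with itself, so $\G_2(R)$, $R\G_2\cap\G_1(R)$, and $\G_2\cap\G_1(R)$ all coincide.

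There is no serious obstacle here; the only care required is bookkeeping of the various intersections and making sure bijectivity of $R$ is invoked correctly — in particular that $R^{-1}\in O(d)$, so that $\G_1(R^{-1})$ is defined and $R[\G_1(R^{-1})]=\G_1(R)$ rather than some expression still involving $R^{-1}\G_1$. Everything else is a direct consequence of the definitions $\G_i(R)=\G_i\cap R\G_i$ and the distributivity of the bijection $R$ over intersection.
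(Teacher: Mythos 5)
Your proof is correct and follows essentially the same route as the paper's: the paper's one-line proof is exactly your rearrangement $[\G_2\cap\G_1(R)]\cap[R\G_2\cap\G_1(R)]=(\G_2\cap R\G_2)\cap\G_1(R)=\G_2(R)$, with the first equality and the inclusion $\G_2(R)\subseteq\G_1(R)$ left implicit (the identity $R[\G_1(R^{-1})]=\G_1(R)$ is noted in the text just before the lemma). You have merely written out the details the paper suppresses.
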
	
	\begin{proof}
		We have $[\G_2\cap\G_1(R)]\cap[R\G_2\cap\G_1(R)] =(\G_2\cap R\G_2)\cap\G_1(R)=\G_2(R)$.
	\end{proof}

	\begin{figure}
		\begin{center}
		 	\input{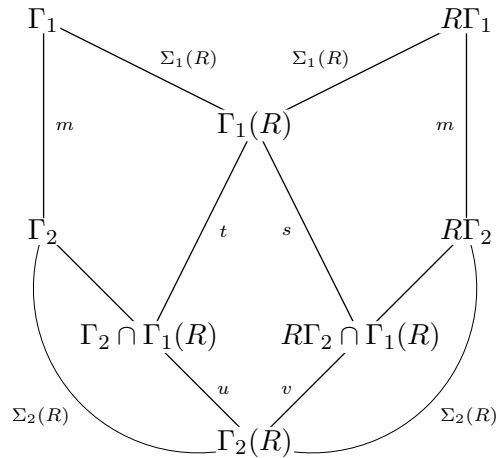}
		\end{center}
		\caption{\label{latticediag}Lattice diagram of the lattices $\G_1$, $R\G_1$, $\G_2$, $R\G_2$, $\G_1(R)$, and $\G_2(R)$ (as groups) and corresponding indices}
	\end{figure}

	Let $s:=[\G_1(R):R\G_2\cap\G_1(R)]$, $t:=[\G_1(R):\G_2\cap\G_1(R)]$, $u:=[\G_2\cap\G_1(R):\G_2(R)]$, and $v:=[R\G_2\cap\G_1(R):\G_2(R)]$ (see Figure \ref{latticediag}).  By comparing 
	indices in Figure \ref{latticediag},  a formula for $\S_2(R)$ in terms of $\S_1(R)$ is readily obtained.  In addition, restrictions on the values of $s$, $t$, $u$, and $v$, as well as  
	interpretations of these values in relation to the colourings of $\G_1(R^{-1})$ and $\G_1(R)$ determined by $\G_2$, can be deduced through repeated applications of the second 
	isomorphism theorem (for groups).  These results are stated in the following theorem, whose proof will be published elsewhere. \cite{LZp}

	\begin{theorem}\label{colouring}
		Consider the colouring of a lattice $\G_1$ determined by a sublattice $\G_2$ of $\G_1$ of index $m$ where each coset $c_i+\G_2$ is assigned the colour $c_i$ (for $i=1,\ldots, 
		m$) with $c_1=0$.  Let $R\in OC(\G_1)$ and $\S_i(R)$ be the coincidence index of $R$ with respect to $\G_i$, for $i=1,2$.  Then
		\begin{equation}\label{s2r}
		 	\S_2(R)=\frac{t\cdot u\cdot\S_1(R)}{m}=\frac{s\cdot v\cdot\S_1(R)}{m}
		\end{equation}
		where $s$ and $t$ are the number of colours in the colouring of $\G_1(R^{-1})$ and $\G_1(R)$, respectively, determined by $\G_2$; $u$ is the number of colours $c_i$ with the 
		property that some points of $\G_1(R^{-1})$ coloured $c_i$ are mapped by $R$ to points coloured $c_1$ in the colouring of $\G_1(R)$; and $v$ is the number of colours in the 
		colouring of $\G_1(R)$ that are intersected by the images under $R$ of those points of $\G_1(R^{-1})$ coloured $c_1$.  Moreover, $s|m$, $t|m$, $u|s$, and $v|t$.
	\end{theorem}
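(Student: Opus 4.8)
The plan is to read the whole statement off the lattice diagram of Figure~\ref{latticediag}, using only multiplicativity of the subgroup index along chains, the second isomorphism theorem for abelian groups, and the fact noted above that $R$ restricts to a group isomorphism $\G_1(R^{-1})\to\G_1(R)$ with $R[\G_1(R^{-1})]=\G_1(R)$. First I would record that $\G_2(R)=\G_2\cap R\G_2\subseteq\G_1\cap R\G_1=\G_1(R)$, so $\G_2(R)$ is contained in each of $\G_2\cap\G_1(R)$ and $R\G_2\cap\G_1(R)$, and by Lemma~\ref{intandeqthm} equals their intersection; since $R\in OC(\G_2)$, $\G_2(R)$ has finite index in $\G_1$, so all indices in sight are finite. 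Computing $[\G_1:\G_2(R)]$ along the three chains $\G_2(R)\subseteq\G_2\subseteq\G_1$, then $\G_2(R)\subseteq\G_2\cap\G_1(R)\subseteq\G_1(R)\subseteq\G_1$, then $\G_2(R)\subseteq R\G_2\cap\G_1(R)\subseteq\G_1(R)\subseteq\G_1$, and using $[\G_2:\G_2(R)]=[R\G_2:\G_2(R)]=\S_2(R)$ together with the definitions of $s,t,u,v$, yields $m\,\S_2(R)=u\,t\,\S_1(R)=v\,s\,\S_1(R)$, which is~\eqref{s2r} and in particular forces $tu=sv$.

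For the divisibility statements I would apply the second isomorphism theorem. From $\G_1(R),\G_2\subseteq\G_1$ we get $\G_1(R)/(\G_2\cap\G_1(R))\cong(\G_1(R)+\G_2)/\G_2$, a subgroup of $\G_1/\G_2$, so $t\mid m$; the same argument with $\G_1,\G_2$ replaced by $R\G_1,R\G_2$ (note $\G_1(R)\subseteq R\G_1$ and $[R\G_1:R\G_2]=m$) gives $s\mid m$. Since $\G_2(R)=(\G_2\cap\G_1(R))\cap(R\G_2\cap\G_1(R))$, the theorem also gives $(\G_2\cap\G_1(R))/\G_2(R)\cong((\G_2\cap\G_1(R))+(R\G_2\cap\G_1(R)))/(R\G_2\cap\G_1(R))$, a subgroup of $\G_1(R)/(R\G_2\cap\G_1(R))$, so $u\mid s$; then $v\mid t$ follows from $tu=sv$.

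For the colour interpretations, observe that two points of $\G_1$ have the same colour exactly when their difference lies in $\G_2$, so for any subgroup $X$ of $\G_1$ the colours occurring on $X$ biject with the cosets of $X\cap\G_2$ in $X$. Taking $X=\G_1(R)$ shows $\G_1(R)$ carries $[\G_1(R):\G_2\cap\G_1(R)]=t$ colours; since $R$ maps $\G_1(R^{-1})$ isomorphically onto $\G_1(R)$ and carries $\G_2\cap\G_1(R^{-1})$ onto $R\G_2\cap\G_1(R)$, the lattice $\G_1(R^{-1})$ carries $[\G_1(R^{-1}):\G_2\cap\G_1(R^{-1})]=[\G_1(R):R\G_2\cap\G_1(R)]=s$ colours. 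The $c_1$-coloured points of $\G_1(R)$ form $\G_2\cap\G_1(R)$, their $R$-preimages form $R^{-1}\G_2\cap\G_1(R^{-1})$, and --- using the $R^{-1}$-analogue of Lemma~\ref{intandeqthm}, that is, $\G_2(R^{-1})=(\G_2\cap\G_1(R^{-1}))\cap(R^{-1}\G_2\cap\G_1(R^{-1}))$ --- the number of colours among those preimages is $[R^{-1}\G_2\cap\G_1(R^{-1}):\G_2(R^{-1})]$, which $R$ identifies with $[\G_2\cap\G_1(R):\G_2(R)]=u$. Likewise the $R$-images of the $c_1$-coloured points of $\G_1(R^{-1})$ form $R\G_2\cap\G_1(R)$, and since $(R\G_2\cap\G_1(R))\cap\G_2=\G_2(R)$ they meet $[R\G_2\cap\G_1(R):\G_2(R)]=v$ colours. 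The only step needing genuine care is this last one: correctly pinning down the subgroups of $R$-preimages and $R$-images and checking which intersections collapse onto $\G_2(R)$ or $\G_2(R^{-1})$; the diagram makes the index arithmetic automatic, but it does not by itself furnish these colour-theoretic identifications.
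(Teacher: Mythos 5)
Your proposal is correct, and it follows exactly the route the paper indicates: the paper itself only sketches the argument (comparing indices along the chains in Figure~\ref{latticediag} for \eqref{s2r}, and ``repeated applications of the second isomorphism theorem'' for the divisibilities and the colour-theoretic readings of $s$, $t$, $u$, $v$), deferring the details to \cite{LZp}. Your write-up supplies precisely those details --- including the key identifications $R[\G_2\cap\G_1(R^{-1})]=R\G_2\cap\G_1(R)$, $(R\G_2\cap\G_1(R))\cap\G_2=\G_2(R)$, and the count of colours on a subgroup $X$ as $[X:X\cap\G_2]$ --- and I find no gap.
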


	An immediate consequence of Theorem \ref{colouring} is the well-known bound on the value of $\S_2(R)$: $\S_1(R)\mid m\,\S_2(R)$ and $\S_2(R)\mid m\,\S_1(R)$.\cite{B97}  Observe that 
	$|I|=s$, $|J|=t$, and $s=t$ if and only if $u=v$.

	\begin{example}\label{ex1}
		Consider the square lattice $\G_1=\mathbbm{Z}^2$ and the sublattice $\G_2$ of index 6 in $\G_1$ generated by $[6,0]^T$ and $[2,1]^T$.  Take the coincidence rotation $R$ of 
		$\G_1$ corresponding to a rotation about the origin by $\tan^{-1}(\frac{3}{4})\approx 37^{\circ}$ in the counterclockwise direction.  It is known that $\S_1(R)=5$.  Choose the 
		coset representatives $c_i=[i-1,0]^T$ for $i\in\set{1,\ldots,6}$ so that $\G_1=\bigcup_{i=1}^6(c_i+\G_2)$.  The colourings of $\G_1$, $\G_1(R^{-1})$, and $\G_1(R)$ determined by 
		$\G_2$ are shown in Figures \ref{colour}, \ref{g1rinv}, and \ref{g1r} which were obtained by designating the colours black, yellow, blue, red, gray, and green to the cosets 
		$c_1+\G_2$, $c_2+\G_2$, $c_3+\G_2$, $c_4+\G_2$, $c_5+\G_2$, and $c_6+\G_2$, respectively.  

		Since all six colours appear in both colourings of $\G_1(R^{-1})$ and $\G_1(R)$, $s=t=6$.  Observe that half of the black points in the colouring of $\G_1(R^{-1})$ are sent by 
		$R$ again to black points in the colouring of $\G_1(R)$ while the rest are sent to red points.  This implies that $u=2$.  Therefore, by \eqref{s2r}, $\S_2(R)=\frac{6\cdot 2\cdot 
		5}{6}=10$.  By Lemma \ref{intandeqthm}, $\G_2(R)$ is obtained by taking the intersection of $\G_2\cap\G_1(R)$ (the black points in the colouring of $\G_1(R)$) and 
		$R[\G_2\cap\G_1(R^{-1})]$ (the resulting points when the black points in the colouring of $\G_1(R^{-1})$ are rotated by $R$), and is shown in Figure \ref{g2r}.
		\begin{figure}
			\begin{center}	
			 	\input{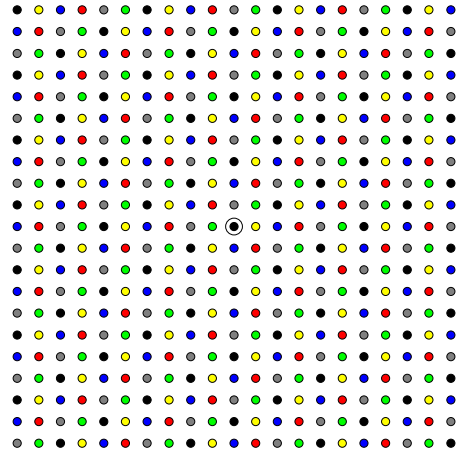}
			\end{center}
			\caption{Colouring of $\G_1=\mathbbm{Z}^2$ determined by the sublattice $\G_2$ ($=$ black dots) with basis $\set{[6,0]^T,[2,1]^T}$.  The encircled dot indicates the 
			origin. (Online: coloured version)}\label{colour}
		\end{figure}		
		\begin{figure}
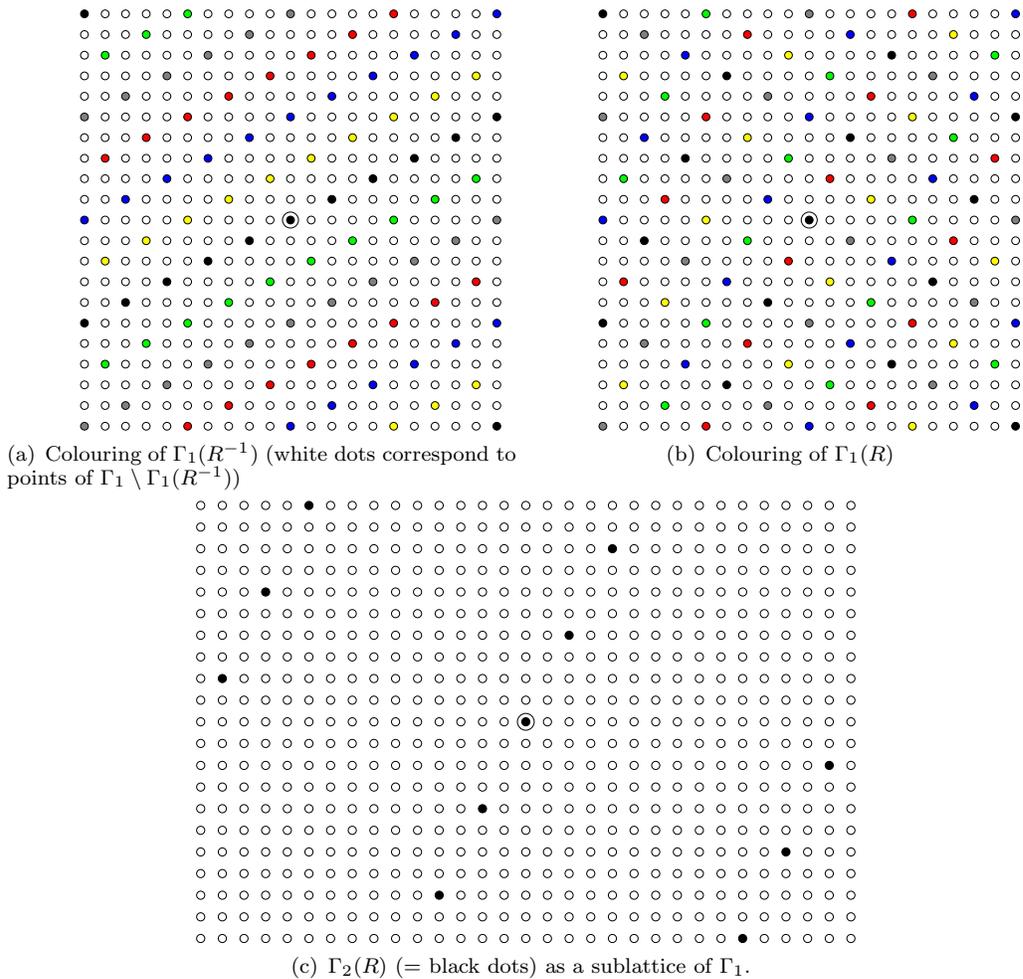

			\begin{center}	
				\subfigure[Colouring of $\G_1(R^{-1})$ (white dots correspond to points of $\G_1\setminus\G_1(R^{-1})$)]{\label{g1rinv}
				\input{exoneinvb.tex}				
				}
				\subfigure[Colouring of $\G_1(R)$]{\label{g1r}
				\input{exoneb.tex}
				}
				\subfigure[$\G_2(R)$ ($=$ black dots) as a sublattice of $\G_1$.]{\label{g2r}
				\input{exoneg2.tex}
				}
				\caption{Colourings of $\G_1(R^{-1})$ and $\G_1(R)$ determined by the sublattice $\G_2$ of Figure \ref{colour}, where $R$ is 
				the counterclockwise rotation about the origin by $\tan^{-1}(\frac{3}{4})\approx 37^{\circ}$.  The CSL $\G_2(R)$ of index 10 in $\G_2$ is obtained by taking all 
				black points of $\G_1(R)$ whose preimages under $R$ are also coloured black. (Online: coloured version)}\label{exone}
			\end{center}			
		\end{figure}

	\end{example}

\section{Colour coincidence}
	We have seen from the previous section how the interaction of the colours in the colourings of $\G_1(R^{-1})$ and $\G_1(R)$ determined by $\G_2$ affects $\G_2(R)$ and thus the value of 
	$\S_2(R)$.  This motivates the following definition.

 	\begin{definition}
		A coincidence isometry $R$ of the lattice $\G_1$ is called a \emph{colour coincidence} of the colouring of $\G_1$ determined by a sublattice $\G_2$ of $\G_1$ if $R$ defines a 
		bijection between the partitions $\set{(c_i+\G_2)\cap \G_1(R^{-1}):c_i+\G_2\in I}$ of $\G_1(R^{-1})$ and $\set{(c_j+\G_2)\cap \G_1(R):c_j+\G_2\in J}$ of $\G_1(R)$ given by 
		$R[(c_i+\G_2)\cap\G_1(R^{-1})]=(c_j+\G_2)\cap\G_1(R)$.	
 	\end{definition}

	In the above definition, the mapping between $(c_i+\G_2)\cap\G_1(R^{-1})$ and $(c_j+\G_2)\cap\G_1(R)$ means that all points, and only those points, coloured $c_i$ in the colouring of 
	$\G_1(R^{-1})$ are sent by $R$ to points coloured $c_j$ in the colouring of $\G_1(R)$.  In such a case, $R$ is said to send or map colour $c_i$ to colour $c_j$.  Furthermore, if $R$ 
	maps a colour $c_i$ onto itself, $R$ is said to \emph{fix the colour $c_i$}.  Hence, a colour coincidence $R$ determines a bijection between the set of colours in the colouring of 
	$\G_1(R^{-1})$ and the set of colours in the colouring of $\G_1(R)$.  In particular, if a colour coincidence $R$ is an element of the symmetry group of $\G_1$ (that is, 
	$\G_1(R^{-1})=\G_1(R)=\G_1$), then $R$ is a colour symmetry of the colouring of $\G_1$. 
	
	Clearly, the identity in $OC(\G_1)$ is a colour coincidence of any colouring of $\G_1$.  In addition, if $R$ is a colour coincidence of a colouring of $\G_1$, then so is $R^{-1}$.  The 
	question of whether the set of colour coincidences of a colouring of a lattice forms a group is yet to be resolved.

	The next theorem, which is a generalisation of the first result in Theorem 2 of \cite{DLPF07} on colour symmetries of colourings of square and hexagonal lattices, provides a 
	characterisation of colour coincidences of lattice colourings determined by some sublattice.  Its proof will be published in \cite{LZp}.
	
	\begin{theorem}\label{colcoinequiv}
		Let $\G_2$ be a sublattice of $\G_1$ of index $m$, with $\G_1=\bigcup_{i=1}^m(c_i+\G_2)$ where $c_1=0$. Then $R\in OC(\G_1)$ is a colour coincidence of the colouring of $\G_1$ 
		determined by $\G_2$ if and only if $R$ fixes colour $c_1$.
	\end{theorem}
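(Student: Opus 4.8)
The plan is to pin down what ``$R$ fixes colour $c_1$'' means as a lattice identity, dispose of the easy implication with a one‑line argument using the origin, and then put the real work into the converse, where the point is that linearity of $R$ together with the group structure of the coincidence site lattices forces $R$ to respect the whole coset pattern once it respects the trivial coset. First I would record the translation: since $c_1=0$, the fibre of colour $c_1$ in the colouring of $\G_1(R^{-1})$ is $\G_2\cap\G_1(R^{-1})$, and likewise $\G_2\cap\G_1(R)$ in $\G_1(R)$; because $R$ is linear and injective with $R[\G_1(R^{-1})]=\G_1(R)$ one has $R[\G_2\cap\G_1(R^{-1})]=R\G_2\cap\G_1(R)$. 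Hence ``$R$ fixes colour $c_1$'' is equivalent to the identity $R\G_2\cap\G_1(R)=\G_2\cap\G_1(R)$, i.e.\ $R[\G_2\cap\G_1(R^{-1})]=\G_2\cap\G_1(R)$. I would also note two facts to be used repeatedly: $\G_1(R^{-1})$ is a sublattice (so a group), and, using $R^{-1}\G_1(R)=\G_1(R^{-1})$, the identity above is symmetric under $R\leftrightarrow R^{-1}$; so ``$R$ fixes $c_1$'' iff ``$R^{-1}$ fixes $c_1$''. By Lemma \ref{intandeqthm} this identity also yields $\G_2(R)=\G_2\cap\G_1(R)$, a convenient by‑product.

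For the implication ``colour coincidence $\Rightarrow$ fixes $c_1$'': the fibre $\G_2\cap\G_1(R^{-1})$ contains $0$, so $c_1+\G_2\in I$, and being a colour coincidence $R$ maps this fibre onto $(c_j+\G_2)\cap\G_1(R)$ for some $c_j+\G_2\in J$. But $R(0)=0$ lies in that image, so $0\in c_j+\G_2$, forcing $c_j+\G_2=\G_2$ and $j=1$ since the coset representatives are distinct. Thus $R$ sends colour $c_1$ to colour $c_1$.

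For the converse, assume $R[\G_2\cap\G_1(R^{-1})]=\G_2\cap\G_1(R)$. The key step is: if $x,y\in(c_i+\G_2)\cap\G_1(R^{-1})$, then $x-y\in\G_2\cap\G_1(R^{-1})$ because $\G_1(R^{-1})$ is a group, hence $R(x)-R(y)=R(x-y)\in\G_2\cap\G_1(R)$ by hypothesis, so $R(x)$ and $R(y)$ carry the same colour in $\G_1(R)$. Consequently, for each $i\in I$ all of $(c_i+\G_2)\cap\G_1(R^{-1})$ is mapped by $R$ into a single fibre $(c_{\sigma(i)}+\G_2)\cap\G_1(R)$; this fibre is nonempty (image of a nonempty set under the injective $R$), so $\sigma(i)\in J$, and $\sigma\colon I\to J$ is well defined. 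Applying the same argument to $R^{-1}$ (legitimate by the $R\leftrightarrow R^{-1}$ symmetry of the hypothesis) gives $\tau\colon J\to I$; applying $R^{-1}$ to the inclusion $R[(c_i+\G_2)\cap\G_1(R^{-1})]\subseteq(c_{\sigma(i)}+\G_2)\cap\G_1(R)$ and comparing nonempty cosets yields $\tau\sigma(i)=i$, and symmetrically $\sigma\tau=\mathrm{id}$, so $\sigma$ is a bijection $I\to J$. Finally I would upgrade each inclusion to equality: any $y\in(c_{\sigma(i)}+\G_2)\cap\G_1(R)$ equals $R(x)$ for some $x\in\G_1(R^{-1})$ (as $R[\G_1(R^{-1})]=\G_1(R)$); $x$ lies in some fibre indexed by $k\in I$, and then $y=R(x)\in(c_{\sigma(k)}+\G_2)\cap\G_1(R)$, forcing $c_{\sigma(k)}+\G_2=c_{\sigma(i)}+\G_2$, hence $k=i$ by injectivity of $\sigma$, so $y$ lies in the image of fibre $i$. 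Thus $R[(c_i+\G_2)\cap\G_1(R^{-1})]=(c_{\sigma(i)}+\G_2)\cap\G_1(R)$ for every $i\in I$, and via the bijection $\sigma$ this is exactly the assertion that $R$ is a colour coincidence.

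The main obstacle is the last stage: passing from the cheap observation ``$R$ maps each source fibre into some target fibre'' to the full statement that $R$ induces a bijection of the two partitions, which requires excluding both the collapse of two source fibres and the failure of surjectivity onto a target fibre. This is precisely where bijectivity of $R$ and the symmetry of the hypothesis under $R\mapsto R^{-1}$ are indispensable; the remainder is routine manipulation of cosets using that $\G_1(R)$ and $\G_1(R^{-1})$ are groups.
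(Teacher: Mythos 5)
Your argument is correct and complete. Note that the paper itself does not prove Theorem \ref{colcoinequiv} --- it defers the proof to \cite{LZp} --- so there is no in-paper argument to compare against; what can be said is that your translation of ``$R$ fixes colour $c_1$'' into the identity $R[\G_2\cap\G_1(R^{-1})]=\G_2\cap\G_1(R)$ agrees exactly with the remark the authors make immediately after the theorem, and your proof supplies the missing details soundly. The forward implication via $R(0)=0$ is the right one-liner. In the converse, the two essential points are both present and correctly handled: (i) the difference trick --- $x,y$ in the same source fibre gives $x-y\in\G_2\cap\G_1(R^{-1})$ because $\G_1(R^{-1})$ is a subgroup, whence $R(x)-R(y)\in\G_2\cap\G_1(R)$ by hypothesis --- which shows each source fibre lands inside a single target fibre; and (ii) the observation that the hypothesis is invariant under $R\leftrightarrow R^{-1}$ (apply $R$ to $R^{-1}\G_2\cap\G_1(R^{-1})=\G_2\cap\G_1(R^{-1})$), which lets you run the same argument backwards to get mutually inverse maps $\sigma$ and $\tau$ and then upgrade the inclusions to equalities using $R[\G_1(R^{-1})]=\G_1(R)$ and disjointness of cosets. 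This is a clean, self-contained group-theoretic argument, consistent in spirit with the authors' stated reliance on the subgroup structure of the CSLs, and as a by-product it recovers $\G_2(R)=\G_2\cap\G_1(R)$ via Lemma \ref{intandeqthm}, which is what Corollary \ref{divandeq} uses.
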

	
	Hence, it is sufficient to verify if $R[\G_2\cap\G_1(R^{-1})]=\G_2\cap\G_1(R)$ is satisfied to conclude whether $R\in OC(\G_1)$ is a colour coincidence of the colouring of the lattice 
	$\G_1$ induced by the sublattice $\G_2$ of $\G_1$.  Furthermore, because of Theorem \ref{colcoinequiv}, a colour coincidence $R$ actually defines a permutation on the set of cosets of 
	$\G_2(R)$ in $\G_1(R)$ with $Rc_i+\G_2(R)=c_j+\G_2(R)$ if $R$ sends colour $c_i$ to $c_j$.

	The next result links the property that a coincidence isometry $R$ of $\G_1$ is a colour coincidence of the colouring of $\G_1$ determined by $\G_2$ with the relationship between 
	$\S_1(R)$ and $\S_2(R)$.
	
	\begin{corollary}\label{divandeq}
		Let $\G_1$ be a lattice having $\G_2$ as a sublattice of index $m$.
		\begin{enumerate}
			\item If $R$ is a colour coincidence of the colouring of $\G_1$ determined by $\G_2$ then $\S_2(R)\mid\S_1(R)$.

			\item If $s=t=m$, then $R$ is a colour coincidence of the colouring of $\G_1$ determined by $\G_2$ if and only if $\S_2(R)=\S_1(R)$.
		\end{enumerate}
	\end{corollary}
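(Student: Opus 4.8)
The plan is to deduce both parts from Theorem~\ref{colcoinequiv} together with the formula~\eqref{s2r} and the divisibilities $s\mid m$, $t\mid m$, $u\mid s$, $v\mid t$ from Theorem~\ref{colouring}. The key translation I will use is that ``$R$ fixes colour $c_1$'' means exactly $R[\G_2\cap\G_1(R^{-1})]=\G_2\cap\G_1(R)$ (the $c_1$-coloured points of $\G_1(R^{-1})$ and of $\G_1(R)$ being $\G_2\cap\G_1(R^{-1})$ and $\G_2\cap\G_1(R)$, since $c_1=0$), and that this equality is in turn equivalent to $u=v=1$. Indeed, the inclusion $R[\G_2\cap\G_1(R^{-1})]\subseteq\G_2\cap\G_1(R)$ says every image of a $c_1$-coloured point of $\G_1(R^{-1})$ is again $c_1$-coloured, i.e.\ $v=1$ (the origin lies in $\G_2\cap\G_1(R^{-1})$ and is fixed by $R$, so the single colour hit is forced to be $c_1$), while the reverse inclusion says every $c_1$-coloured point of $\G_1(R)$ has a $c_1$-coloured $R$-preimage, i.e.\ no colour other than $c_1$ is sent to $c_1$, which is $u=1$ (again $c_1$ itself is sent to $c_1$, via the origin).

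For part~(1): if $R$ is a colour coincidence, then Theorem~\ref{colcoinequiv} gives that $R$ fixes $c_1$, hence $v=1$ by the above. Substituting $v=1$ into the second expression of~\eqref{s2r} gives $\S_2(R)=s\,\S_1(R)/m=\S_1(R)/(m/s)$, and since $s\mid m$ the integer $m/s$ satisfies $\S_1(R)=(m/s)\,\S_2(R)$, so $\S_2(R)\mid\S_1(R)$. (Alternatively, avoiding Theorem~\ref{colouring}: $R$ fixing $c_1$ gives $R\G_2\cap\G_1(R)=\G_2\cap\G_1(R)$, so by the ``in particular'' clause of Lemma~\ref{intandeqthm} we get $\G_2(R)=\G_2\cap\G_1(R)$, and comparing the two routes from $\G_1$ to $\G_2(R)$ in Figure~\ref{latticediag} yields $m\,\S_2(R)=t\,\S_1(R)$ with $t\mid m$.)

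For part~(2) assume $s=t=m$. If $R$ is a colour coincidence, the computation of part~(1) with $s=m$ already gives $\S_2(R)=m\,\S_1(R)/m=\S_1(R)$. Conversely, suppose $\S_2(R)=\S_1(R)$; since $\S_1(R)\geq 1$ we may cancel it in~\eqref{s2r}, and with $s=t=m$ this forces $u=1$ (from $\S_2(R)=t\,u\,\S_1(R)/m$) and $v=1$ (from $\S_2(R)=s\,v\,\S_1(R)/m$). By the translation in the first paragraph, $v=1$ gives $R[\G_2\cap\G_1(R^{-1})]\subseteq\G_2\cap\G_1(R)$ and $u=1$ gives the reverse inclusion, so $R[\G_2\cap\G_1(R^{-1})]=\G_2\cap\G_1(R)$, i.e.\ $R$ fixes $c_1$, and Theorem~\ref{colcoinequiv} then gives that $R$ is a colour coincidence.

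The one step needing care is the backward direction of~(2): one must verify that the single colour class counted by $v$ (resp.\ the single colour mapped to $c_1$ counted by $u$) is necessarily $c_1$ rather than some other colour, which is precisely where the normalisation $c_1=0\in\G_1(R)\cap\G_1(R^{-1})$ with $R0=0$ is used. Everything else is bookkeeping with indices and the already-established identity~\eqref{s2r}.
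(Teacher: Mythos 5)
Your proof is correct and follows essentially the same route as the paper: reduce ``colour coincidence'' to ``fixes colour $c_1$'' via Theorem~\ref{colcoinequiv}, identify that condition with $u=v=1$ (the paper does this through Lemma~\ref{intandeqthm} and the index definitions, you do it through the colour interpretation anchored at the origin --- both are valid), and then read off the conclusions from~\eqref{s2r} together with $s\mid m$, $t\mid m$. The only cosmetic difference is your use of the $s\cdot v$ form of~\eqref{s2r} in part~(1) where the paper uses the $t\cdot u$ form.
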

	\begin{proof}
		\begin{enumerate}
			\item By Theorem \ref{colcoinequiv} and Lemma \ref{intandeqthm}, $R[\G_2\cap\G_1(R^{-1})]=\G_2\cap\G_1(R)=\G_2(R)$.  Thus, $u=v=1$ and by \eqref{s2r}, 
			$\S_1(R)=\frac{m}{t}\cdot\S_2(R)$.  Since $t\mid m$, $\S_2(R)\mid\S_1(R)$.

			\item It follows from above that $\S_2(R)=\S_1(R)$ whenever $R$ is a colour coincidence of the colouring of $\G_1$ and $t=m$.  Conversely, if 
			$\S_2(R)=\S_1(R)$ then $[\G_1(R):\G_2(R)]=t\cdot u=s\cdot v=m$ (see Figure \ref{latticediag}) and thus, $u=v=1$.  Hence, $R[\G_2\cap\G_1(R^{-1})]=\G_2\cap\G_1(R)$ or $R$ 
			fixes colour $c_1$. By Theorem \ref{colcoinequiv}, $R$ is a colour coincidence of the colouring of $\G_1$.
		\end{enumerate}
	\end{proof}

	The first result is a reformulation of a remark in \cite{B97}.  The condition $s=t=m$ in the second result means that all $m$ colours of the colouring of $\G_1$ appear in the colourings 
	of $\G_1(R^{-1})$ and $\G_1(R)$ determined by $\G_2$.

	\begin{example}
		Take the rectangular sublattice $\G_2$ with basis $\set{[3,0]^T,[0,1]^T}$ that is of index 3 in the square lattice $\G_1=\mathbbm{Z}^2$, and the coincidence rotation $R$ of 
		Example \ref{ex1}.  Both colourings of $\G_1(R^{-1})$ and $\G_1(R)$ include three colours (see Figure \ref{extwo}), and thus $s=t=3$.  Observe that $R$ fixes the colour black 
		(corresponding to the coset $c_1+\G_2=\G_2$), which means that $R$ is a colour coincidence of the colouring of $\G_1$ by Theorem \ref{colcoinequiv}.  Indeed, $R$ fixes the 
		colour black, and interchanges the colours blue and red.  Moreover, by Corollary \ref{divandeq}, $\S_2(R)=\S_1(R)$.  The CSL $\G_2(R)$ is made up of all black points in the 
		colouring of $\G_1(R)$.
		\begin{figure}
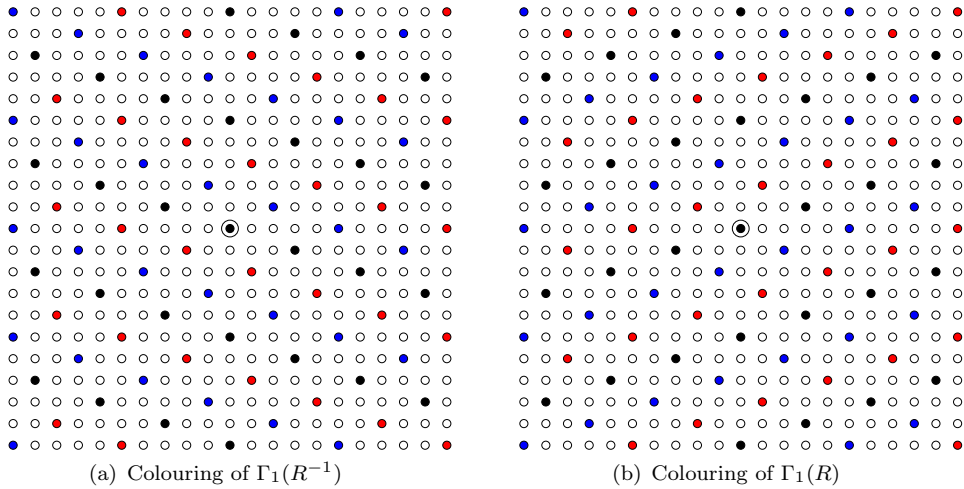

			\begin{center}	
				\subfigure[Colouring of $\G_1(R^{-1})$]{
				\input{extwoinvb.tex}				
				}
				\subfigure[Colouring of $\G_1(R)$]{
				\input{extwob.tex}
				}
				\caption{The coincidence rotation $R$ (the same as in Figure \ref{exone}) is a colour coincidence of the colouring of $\G_1=\mathbbm{Z}^2$ determined by the 
				sublattice $\G_2$ generated by $[3,0]^T$ and $[0,1]^T$ because the colour black (corresponding to $\G_2$) is fixed by $R$.  The blue and red colours are 
				interchanged by $R$.  The CSL $\G_2(R)$ consists of the black points in the colouring of $\G_1(R)$. (Online: coloured version)}\label{extwo}
			\end{center}			
		\end{figure}
	\end{example}

\section{Example on the Ammann-Beenker tiling}
	Even though all of the previous results were stated for lattices, they also hold for $\mathbbm{Z}$-modules in $\mathbbm{R}^d$. \cite{LZp}  This suggests that the same techniques are 
	applicable when dealing with quasicrystals.  In fact, the coincidence problem for the set of vertex points of a quasicrystalline tiling breaks into two parts: the coincidence problem 
	for the underlying limit translation module, and the computation of the window correction factor. \cite{PBR96,B97}  The latter depends on the geometry of the window and equals 
	1 if the window is circular.  In such a case, results on the underlying module are exactly the same as that on the set of vertex points of the tiling.

	To illustrate, we consider the set of vertex points $P_1$ of the eightfold symmetric Ammann-Beenker tiling (see Figure \ref{abtsubset}).  We now have to consider the 
	coincidence problem of the underlying module $M_1$ which is the standard eight-fold planar module $\mathbbm{Z}[\xi]$ of rank 4, where $\xi=e^{\pi i/4}$.  That is, we treat $P_1$ as a 
	discrete subset of $M_1$ embedded in the complex plane.  Due to lack of space, we omit the discussion of the window correction factor here, but notice that the acceptance factor for the 
	Ammann-Beenker tiling is anyway very close to one. \cite{PBR96}
	\begin{figure}
		\begin{center}
			\input{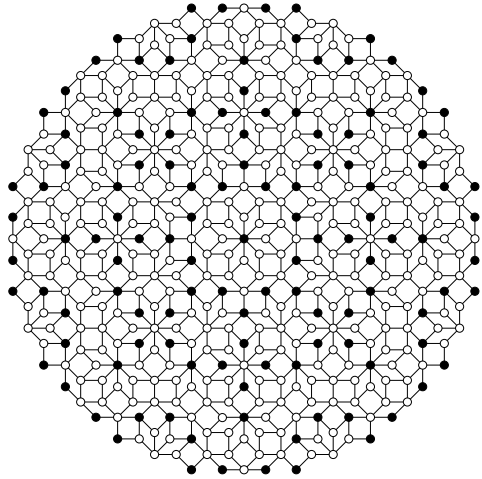}	
		\end{center}
		\caption{The set of vertices $P_1$ of the Ammann-Beenker tiling and the subset $P_2$ (coloured black) of $P_1$}\label{abtsubset}
	\end{figure}  
	
	Let $P_2$ be the set of points of $P_1$ coloured black in Figure \ref{abtsubset}. The subset $P_2$ can be obtained as the intersection of the submodule $M_2=\langle 
	1+\xi^2\rangle$ (of index 4 in $M_1$) and $P_1$.  Choose the coincidence rotation $R$ of $M_1$ (and $P_1$) to be the counterclockwise rotation about the origin at an angle 
	of $\tan^{-1}(-2\sqrt{2})\approx 109.5^{\circ}$.  The coincidence index of $R$ with respect to $M_1$ is 9.
	
	A colouring of $P_1$ with four colours determined by $M_2$ is shown in Figure 1 of \cite{BDLPEFF08}.  Denote by $P_1(R^{-1})$ and $P_1(R)$ the set of coinciding points of $P_1$ and 
	$R^{-1}P_1$, and of $P_1$ and $RP_1$, respectively.  Figures \ref{abtinv} and \ref{abt} show the colourings of $P_1(R^{-1})$ and $P_1(R)$ induced by the colouring of $P_1$ determined by 
	$M_2$.  Observe that all four colours appear in both colourings, and $R$ is a colour coincidence of the colouring (with all colours being fixed).  Hence, the coincidence index of $R$ 
	with respect to $M_2$ is also 9.  Also, the set of coinciding points, $P_2(R)$, consists of all points of $P_2$ in the colouring of $P_1(R)$. 

	\begin{figure}
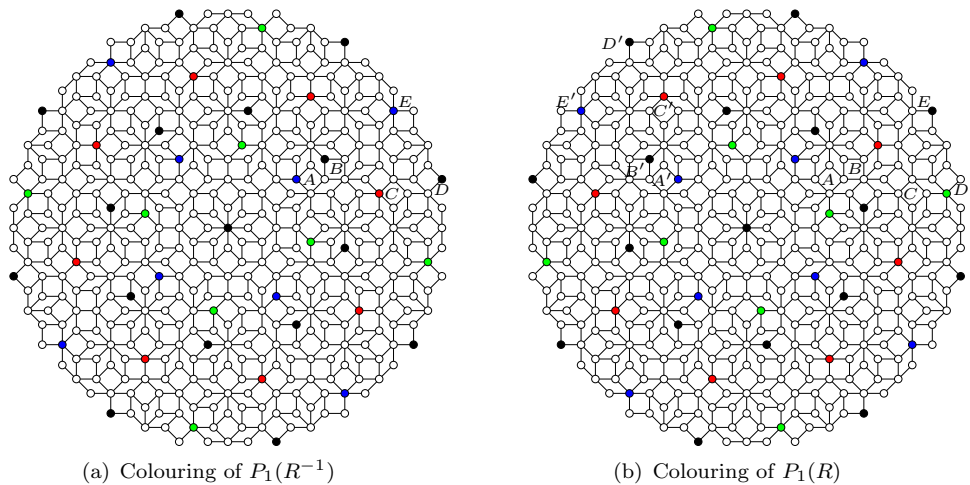

		\begin{center}
			\subfigure[Colouring of $P_1(R^{-1})$]{\label{abtinv}
			\input{abtinv.tex}				
			}
			\subfigure[Colouring of $P_1(R)$]{\label{abt}
			\input{abt.tex}
			}
			\caption{Colourings of $P_1(R^{-1})$ and $P_1(R)$ determined by $P_2$ (see Figure \ref{abtsubset}), where the coincidence rotation $R$ corresponds to a rotation about 
			the origin by $\tan^{-1}(-2\sqrt{2})\approx 109.5^{\circ}$ in the counterclockwise direction.  The points labelled $A$, $B$, $C$, $D$, and $E$ are mapped by $R$ to the 
			points 	labelled $A'$, $B'$, $C'$, $D'$, and $E'$, respectively. $R$ is a colour coincidence of the colouring of $\G_1$ that fixes all the colours. The intersection of 
			$P_2$ and $RP_2$, $P_2(R)$, is made up of all black points in the colouring of $P_1(R)$. (Online: coloured version)}\label{exthree}
		\end{center}			
	\end{figure}

\section{Conclusion and outlook}
	A method of calculating the coincidence index of a coincidence isometry of a lattice with respect to a sublattice is formulated in this paper by considering the colouring of the lattice 
	determined by the sublattice.  Examples in the plane were given, for which the actual colourings were shown.  However, this does not mean that	the results acquired are only applicable 
	when actual colourings of the lattices can be visualised.  Examples for lattices-sublattices in higher dimensions are also possible, as well as general results (for instance, 
	sublattices of prime index) in arbitrary dimension. \cite{LZp}  With these results, it is foreseeable that the coincidence indices of other type of lattices may be computed, as long as 
	the lattices can be treated as sublattices of other lattices for which the coincidence problem has already been completely solved.  Though more involved, these results are still 
	applicable in the quasicrystal case.
	
	The generalisation of a colour symmetry to colour coincidences is not only interesting in its own right, but also provides a further connection between the relationship of the 
	coincidence indices of a lattice and sublattice.  Moreover, Theorem \ref{colcoinequiv} permits a simpler way of determining the colour symmetry group of colourings of lattices and 
	$\mathbbm{Z}$-modules.

\section*{Acknowledgements}
	M. Loquias would like to thank the Deutscher Akademischer Austausch Dienst (DAAD) for financial support during his stay in Germany. This work was supported by 
	the German Research Council (DFG), within the CRC 701.

\end{document}